\begin{document}

\newtheorem{theorem}{Theorem}
\newtheorem{lemma}[theorem]{Lemma}
\newtheorem{sublemma}{Sublemma}
\newtheorem{proposition}[theorem]{Proposition}
\newtheorem{corollary}[theorem]{Corollary}
\renewcommand{\thefootnote}{\fnsymbol{footnote}}

\markboth{G. T. JIN and S. PARK}
{Quadrisecant Approximation of Hexagonal Trefoil Knot}

\title{Quadrisecant Approximation of Hexagonal Trefoil Knot}

\author{GYO TAEK JIN and SEOJUNG PARK}

\address{Department of Mathematical Sciences, KAIST, Daejeon 305-701 Korea}
\email{trefoil@kaist.ac.kr, demian208@kaist.ac.kr}

\begin{abstract}
It is known that every nontrivial knot has at least two quadrisecants. Given a knot, we mark each intersection point of each of its quadrisecants. Replacing each subarc between two nearby marked points with a straight line segment joining them, we obtain a polygonal closed curve which we will call the quadrisecant approximation of the given knot. We show that for any hexagonal trefoil knot, there are only three quadrisecants, and the resulting quadrisecant approximation has the same knot type.
\end{abstract}

\keywords{knot, quadrisecant, trefoil knot, polygonal knot, quadrisecant approximation}

\subjclass[2000]{57M25}

\maketitle

\section{Preliminaries}	
A \emph{knot\/} is a locally flat simple closed curve in $\mathbb R^3$. Two knots said to be equivalent if there is an orientation preserving homeomorphism of $\mathbb R^3$ onto $\mathbb R^3$ carrying one to the other. The equivalence class of a knot under this equivalence relation is called its \emph{knot type\/}. A knot is said to be \emph{nontrivial\/}, if it does not have the knot type of a planar circle.

A \emph{quadrisecant\/} of a knot $K$ is a straight line $L$ such that $K\cap L$ has at least four components~\cite{burde-zieschang,pannwitz}.

\begin{theorem}[Pannwitz]
Every nontrivial knot has at least two quadrisecants.
\end{theorem}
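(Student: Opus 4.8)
The plan is to produce quadrisecants as extremal configurations among a well-chosen family of chords, following the essential-secant method. First I would reduce to the case of a knot $K$ in general position (a generic polygonal or smooth representative), since quadrisecants of a perturbed knot limit to quadrisecants of $K$ and the minimal count can only drop under such a limit. For two distinct points $a,b\in K$ I would consider the chord $[a,b]$ together with one of the two subarcs of $K$ it cuts off; closing this arc by the segment and pushing it slightly off $K$ yields a loop in the complement $\mathbb R^3\setminus K$. Call the chord \emph{essential} when, for both choices of subarc, every such loop is homotopically nontrivial in Kuperberg's sense. The strategy is then a two-step ascent: show that a nontrivial knot admits an essential secant, that an extremal essential secant is forced to meet $K$ a third time, and that an extremal essential trisecant is forced to meet $K$ a fourth time, hence is a quadrisecant.

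The first genuine difficulty is the existence of an essential secant, and this is where nontriviality must enter. I would argue by contraposition: if no chord of $K$ were essential, then every chord could be capped by a disk in the complement in a compatible way, and assembling these caps over a triangulated singular spanning disk of $K$ would contract $K$ through the complement, exhibiting $K$ as unknotted. Making ``compatible'' precise — so that the local caps fit together into a global nullhomotopy — is the crux of the whole argument and the step I expect to be the main obstacle. Alongside this I would establish a compactness statement: the essentiality condition is closed, so the set of essential chords (and later, of essential trisecants) is a compact subset of the configuration space of tuples of distinct points of $K$, which guarantees that the extremal configurations used below actually exist.

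With existence and compactness in hand, I would run the extremal step twice. Maximize the length $|a-b|$ over all essential secants; at a maximizer, a first-order variation shows that unless the chord can be lengthened while remaining essential — contradicting maximality — the line through $a,b$ must pierce $K$ at a third point, upgrading the essential secant to an essential trisecant. Repeating the maximization over the now-smaller compact set of essential trisecants forces, by the same variational obstruction, a fourth intersection point, yielding a quadrisecant. To reach the stated lower bound of \emph{two} quadrisecants rather than just one, I would not rely on a single maximum but on a min--max (mountain-pass) argument over the essential configuration space: its nontrivial topology forces at least two distinct critical configurations of the length functional, and each such critical configuration is a quadrisecant by the variational analysis above. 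A cleaner alternative, should the first quadrisecant turn out to be isolated, is to excise a neighborhood of it from the configuration space and rerun the extremal argument on the remainder.
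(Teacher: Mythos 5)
The paper does not actually prove this statement; it is quoted from Pannwitz's 1933 paper, where the result comes from a combinatorial count of the singularities of a generic singular spanning disk, yielding a lower bound of the form $a(a-1)$ quadrisecants with $a\ge 2$ for a nontrivial knot --- so the count of two is automatic. Your sketch instead follows the much later Kuperberg--Denne line via essential secants, which is a legitimate alternative route in principle, but as written it has genuine gaps that go beyond presentation.

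The decisive gap is the extremal step. Maximizing $|a-b|$ over essential secants does not force the line through $a$ and $b$ to meet $K$ a third time: the first-order condition at a length maximizer of a chord with both endpoints constrained to $K$ is that the chord be orthogonal to $K$ at both endpoints (a doubly critical chord), and this says nothing about further intersections with $K$. In the actual Kuperberg and Denne arguments, the third and fourth intersection points are produced by topological arguments about the \emph{set} of essential secants --- its compactness, its connectivity, and the fact that essentiality of the secant $ab$ can only change as $b$ passes through a trisecant point --- not by extremizing length; you would need to supply that structure theory, and it is the bulk of the work. Second, the existence of an essential secant (your ``assembling caps into a nullhomotopy'') is exactly where Dehn's lemma or the loop theorem must be invoked; you correctly flag it as the crux but leave it unresolved, so nontriviality of $K$ never actually enters your argument in a checkable way. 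Third, the bound of \emph{two}: your min--max proposal presupposes nontrivial topology of the space of essential configurations that you have not exhibited, and excising a neighborhood of one quadrisecant could destroy the essentiality or compactness properties the first run relied on; Pannwitz gets the count $\ge 2$ for free from her inequality. Finally, the reduction to general position is more delicate than stated: a limit of quadrisecants of perturbed knots can degenerate, with two or more of the four intersection points colliding, so one must separately argue that the limiting line still meets $K$ in four distinct components.
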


A \emph{polygonal knot\/} is a knot which is the union of finitely many straight line segments. Each maximal line segment of a polygonal knot is called an \emph{edge\/} and its end points are called \emph{vertices\/}.

\begin{theorem}[Jin-Kim]
The trefoil knot\/\footnote[2]{$3_1$ in \cite{rolfsen}} can be constructed as a polygonal knot with at least six edges.
\end{theorem}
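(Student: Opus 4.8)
The plan is to establish that the minimal edge number of the trefoil equals six by proving the two bounds separately: that six edges suffice to realize a trefoil, and that no closed polygon with five or fewer edges can be knotted at all.

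For the upper bound I would exhibit an explicit hexagonal curve and verify its knot type directly. Choosing six vertices in $\mathbb{R}^3$ --- for instance placing alternate vertices on two parallel triangles in a suitably twisted, antiprism-like position --- I would project to a generic plane and read off the resulting diagram. The task is then to check that this projection is a three-crossing diagram realizing the standard trefoil $3_1$; since the only knots of crossing number at most three are the unknot and the trefoil, it suffices to confirm that the three crossings cannot be reduced. This direction is essentially a coordinate computation once a good vertex set is fixed.

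For the lower bound I would argue that every polygon with at most five edges bounds an embedded disk and is therefore unknotted, proceeding by successively lowering the edge count. The cases $n=3$ and $n=4$ are immediate: a triangle is planar, and a quadrilateral $v_1v_2v_3v_4$ is spanned by the two triangles sharing the diagonal $v_1v_3$, which are two faces of a (possibly degenerate) tetrahedron and so meet only along that diagonal, yielding an embedded spanning disk. The engine of the general argument is an \emph{ear-flattening} reduction: if for some vertex $v_i$ the triangle $v_{i-1}v_iv_{i+1}$ spanned by its two incident edges meets the rest of the polygon only in the vertices $v_{i-1}$ and $v_{i+1}$, then replacing the path $v_{i-1}v_iv_{i+1}$ by the chord $v_{i-1}v_{i+1}$ is an ambient isotopy that decreases the edge count by one without changing the knot type.

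Thus the lower bound reduces to the combinatorial-geometric claim that a pentagon always possesses such a removable ear, and this is the step I expect to be the main obstacle. A single ear can be blocked: among the three edges not incident to $v_i$, two share a vertex with the ear triangle, but the remaining opposite edge $v_{i+2}v_{i-2}$ may pierce it, and one must rule out the possibility that all five ears are simultaneously obstructed. I would attack this by analysing the convex hull of the five vertices: in each hull type (one vertex interior to a tetrahedron, or all five vertices extreme) I would locate a vertex whose incident edges bound an ear triangle that the remaining edges cannot pierce, the obstruction to piercing being supplied by a supporting plane at a hull vertex or facet. Carrying out this case analysis carefully reduces the pentagon to the quadrilateral case and completes the proof.
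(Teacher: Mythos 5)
The paper does not prove this statement; it is quoted from Jin and Kim \cite{jin-kim1993}, so there is no in-paper argument to compare against. Your architecture --- an explicit hexagonal realization for the upper bound, and triviality of every polygon with at most five edges for the lower bound, the latter by flattening an empty ear and descending to the quadrilateral --- is the standard proof that the stick number of the trefoil is six, and the individual steps you state are correct. Two small remarks on the upper bound: you should first perturb to general position (this does not change the knot type), and to certify that your three-crossing diagram is knotted you should use an invariant such as tricolorability rather than the informal claim that the crossings ``cannot be reduced.''

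The genuine gap sits exactly where you predicted: the claim that every pentagon has a removable ear carries the whole lower bound, and the mechanism you propose for it does not work as stated. A supporting plane at a hull vertex $v_i$ says nothing about whether the opposite edge $v_{i+2}v_{i+3}$ pierces $\Delta_i=v_{i-1}v_iv_{i+1}$, since that triangle extends away from $v_i$ into the hull and is not separated from the opposite edge by any plane through $v_i$. The statement that actually closes the argument concerns hull \emph{edges}, not hull vertices: if a segment $v_av_b$ is an edge of $\operatorname{conv}(v_1,\dots,v_5)$, it cannot meet the relative interior of a triangle $v_cv_dv_e$, because a piercing point would lie in the relative interior of the face $v_av_b$ and hence force $v_c,v_d,v_e$ to lie in that one-dimensional face, contradicting general position. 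Now count. Note that the edge opposite $v_i$ is just the pentagon edge $e_{i+2\,i+3}$, and by general position the two edges adjacent to $\Delta_i$ meet it only at $v_{i\pm1}$, so $\Delta_i$ is reducible as soon as its opposite edge is a hull edge. If all five vertices are extreme, the hull has nine edges among the ten segments, so at most one pentagon edge fails to be a hull edge and at least four ears are free; if some $v_k$ lies inside the tetrahedron of the other four, only the two pentagon edges incident to $v_k$ fail to be hull edges and at least three ears are free. Either way a reducible ear exists, and your reduction to the quadrilateral case goes through.
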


\section{Quadrisecants of a Hexagonal Trefoil Knot}

A polygonal knot is said to be in \emph{general position\/} if no three vertices are collinear and no four vertices are coplanar. It is clear that a quadrisecant of a polygonal knot in general position intersects the knot in finitely many points.

Let $K$ be a polygonal knot. A triangular disk $\Delta$ determined by a pair of adjacent edges of $K$ is said to be \emph{reducible\/} if the $\Delta$ intersects $K$ only in the two edges, and \emph{irreducible\/} otherwise. If $K$ is a polygonal knot in general position with the least number of edges in its knot type, then every triangular disk determined by a pair of adjacent edges of $K$ is irreducible.

Let $K$ denote a hexagonal knot with vertices $v_1,\ldots,v_6$, and edges $e_{i\,i+1}$ joining $v_i$ and $v_{i+1}$, for $i=1,\ldots,6$, where the subscripts are written modulo 6.
For $i=1,\ldots,6$, the triangular disk with vertices at $v_{i-1}, v_i, v_{i+1}$ will be denoted as $\Delta_i$.
In~\cite{huh-jeon}, Huh and Jeon showed that in a hexagonal trefoil knot, the edges and the triangular disks intersect in a special pattern as follows:

\begin{proposition}[Huh-Jeon]\label{prop:huh-jeon}
If $K$ is a hexagonal trefoil knot, then the triangular disks $\Delta_{1},\ldots,\Delta_{6}$ are irreducible. Furthermore, up to a cyclic relabeling of the vertices, the following are the only nonempty intersections among edges and triangular disks:
$$
\begin{aligned}
\operatorname{int}e_{23}\cap\operatorname{int}\Delta_{5},& &\operatorname{int}e_{23}\cap\operatorname{int}\Delta_{6},\\
\operatorname{int}e_{45}\cap\operatorname{int}\Delta_{1},& &\operatorname{int}e_{45}\cap\operatorname{int}\Delta_{2},\\
\operatorname{int}e_{61}\cap\operatorname{int}\Delta_{3},& &\operatorname{int}e_{61}\cap\operatorname{int}\Delta_{4}.
\end{aligned}
$$
\end{proposition}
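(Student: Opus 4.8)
The plan is to split the statement into its two assertions: the irreducibility of the six triangular disks, which is essentially immediate, and the determination of the intersection list, which carries the real content. For irreducibility, the theorem of Jin--Kim says that six edges is the minimum for the trefoil type, so a hexagonal trefoil $K$ realizes the least edge number in its knot type. The remark preceding the proposition then gives at once that every $\Delta_i$ is irreducible; in particular each $\operatorname{int}\Delta_i$ is pierced by the interior of at least one edge of $K$.

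Next I would pin down which edges can possibly pierce a given $\Delta_i$. Since $\Delta_i$ has vertices $v_{i-1},v_i,v_{i+1}$, any edge $\overline{wu}$ sharing one of these vertices $w$ meets the plane $P$ of $\Delta_i$ only at $w$: its other endpoint $u$ cannot lie in $P$ since by general position no four vertices are coplanar, so the open segment misses $P\supset\Delta_i$ entirely. This excludes the two bounding edges $e_{i-1,i},e_{i,i+1}$ together with the further adjacent edges $e_{i-2,i-1},e_{i+1,i+2}$, leaving only the two edges $e_{i+2,i+3}$ and $e_{i+3,i+4}$, which meet at the antipodal vertex $v_{i+3}$, as candidate piercers of $\Delta_i$. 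Dually, each edge $e_{j,j+1}$ is a candidate piercer of exactly the two disks $\Delta_{j-3}$ and $\Delta_{j-2}$. Recording the relation ``edge is a candidate piercer of disk'' yields a $6$-cycle alternating between the six edges and the six disks, and irreducibility says each disk vertex of this cycle is actually pierced by at least one of its two neighbouring edges.

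The heart of the proof is to upgrade this covering condition to the exact list. I would show that the set of edges that actually pierce is a \emph{maximum independent set} in the edge $6$-cycle, i.e. no two piercing edges are adjacent. Granting this, a counting step closes the argument: six disks must be covered, only three mutually non-adjacent edges are available, and each covers at most two disks, so each covers exactly two; the only such independent triple is $\{e_{23},e_{45},e_{61}\}$, which produces precisely the asserted list (the complementary triple $\{e_{12},e_{34},e_{56}\}$ being a cyclic shift, which accounts for the phrase ``up to a cyclic relabeling''). To establish the non-adjacency I would fix a generic projection of $K$: a piercing of $\Delta_i$ by an edge $e$ records, through the two bounding edges $e_{i-1,i},e_{i,i+1}$, a definite over/under pattern of $e$ relative to that pair, and assembling these local patterns into the Gauss data of the hexagonal diagram I would argue that an adjacent pair of piercing edges forces a configuration that either represents the unknot, contradicting that $K$ is a trefoil, or exhibits a disk that is after all reducible, so that a triangle move drops the edge number below six against Jin--Kim minimality.

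The main obstacle is exactly this last step: converting the purely combinatorial covering condition on the incidence $6$-cycle into the rigid geometric pattern. The difficulty is that piercing is a three-dimensional, projection-independent phenomenon while over/under data live in a chosen projection, and keeping the two synchronized while exhausting the finitely many covering patterns of the cycle is delicate. A cleaner alternative I would also attempt is to bypass projections and instead read the signed count of piercings of $\Delta_i$ as the linking number of the closed triangle loop $\partial\Delta_i$ with the rest of $K$; the conjunction of the global constraint forced by the trefoil with the local covering condition on the $6$-cycle should again leave only the stated configuration.
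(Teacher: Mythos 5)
First, a point of comparison: the paper does not prove this proposition at all. It is quoted from Huh and Jeon's work on linear embeddings of $K_6$ and used as an external input, so there is no internal proof to measure yours against; your proposal is an attempt to supply a proof from scratch. Your two reductions are correct and cleanly organized. Irreducibility of every $\Delta_i$ does follow from Jin--Kim minimality together with the remark preceding the proposition. The general-position observation that an edge sharing a vertex with $\Delta_i$ meets the plane of $\Delta_i$ only at that vertex correctly narrows the possible piercers of $\Delta_i$ to the two edges $e_{i+2\,i+3}$ and $e_{i+3\,i+4}$ meeting at the antipodal vertex $v_{i+3}$ (equivalently, $e_{j\,j+1}$ can only pierce $\Delta_{j-3}$ and $\Delta_{j-2}$), and your counting step is valid: if the piercing edges were known to be pairwise non-adjacent in the hexagon, then covering all six disks would force exactly the stated alternating triple, up to cyclic relabeling.

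The gap is the independence claim itself, which is where all of the content of the proposition lives, and you have not proved it. Irreducibility only says that each disk is pierced by at least one of its two candidates; nothing you have established excludes, say, $e_{12}$ piercing only $\Delta_5$, $e_{23}$ piercing only $\Delta_6$, $e_{45}$ piercing $\Delta_1,\Delta_2$ and $e_{61}$ piercing $\Delta_3,\Delta_4$, which covers all six disks with four piercing edges, two of them adjacent. Your two proposed routes to rule out such configurations are both problematic as stated. The projection/Gauss-data route is only a program: you would have to enumerate the finitely many covering patterns and actually derive, for each, either unknottedness or a reducible disk, and the translation of a three-dimensional piercing into over/under data of a chosen projection is exactly the delicate bookkeeping you acknowledge but do not carry out. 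The linking-number alternative is not well defined: the complement in $K$ of the two edges bounding $\Delta_i$ is an arc whose endpoints $v_{i-1}$ and $v_{i+1}$ lie on $\partial\Delta_i$ itself, so the two curves are not disjoint, there is no linking number, and the signed count of intersections of that arc with $\operatorname{int}\Delta_i$ is not an isotopy invariant. To close the argument you would need either to reproduce the case analysis of Huh--Jeon (or Calvo's classification of hexagonal knots), or to find a genuinely new proof of the independence lemma; as written, the proposal reduces the proposition to an unproven statement of essentially the same difficulty.
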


\begin{figure}[h]
\newcommand{\edges}{
\put(72,49){$e_{12}$}
\put(-2,21){$e_{61}$}
\put(30,-2){$e_{56}$}
\put(46,75){$e_{45}$}
\put(12,67){$e_{34}$}
\put(72,11){$e_{23}$}
}
\centering
\begin{picture}(108,102)(-5,-5)
\edges
\put(25,15){${\Delta_5}$}
\includegraphics[scale=0.6]{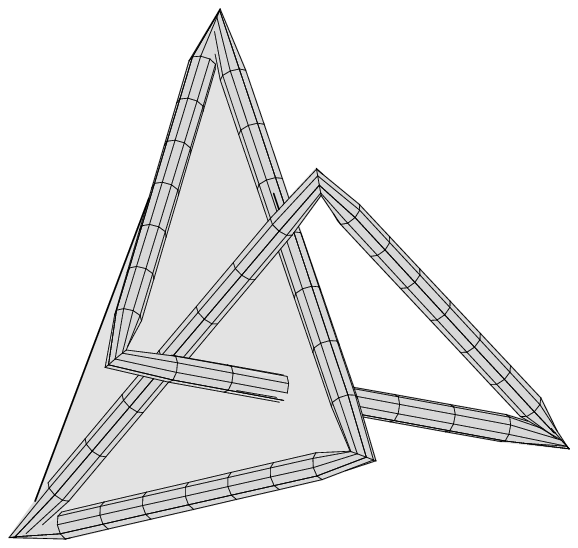}
\end{picture}
\qquad
\begin{picture}(108,102)(-5,-5)
\edges
\put(25,15){${\Delta_6}$}
\includegraphics[scale=0.6]{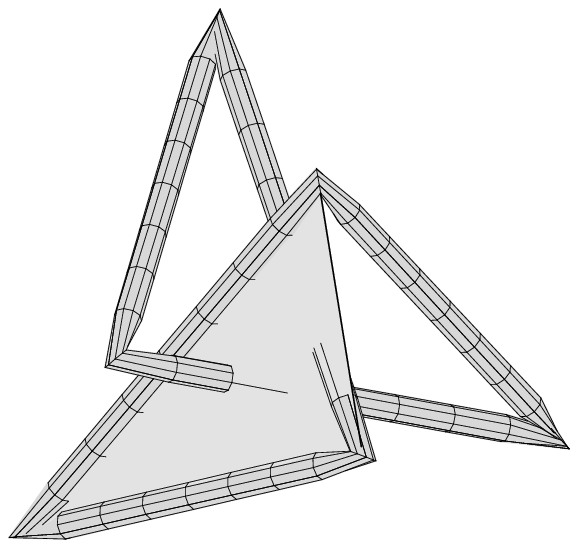}
\end{picture}
\caption{
$e_{23}\cap\Delta_{5}$ and
$e_{23}\cap\Delta_{6}$}
\end{figure}

\begin{theorem}\label{thm:only-3}
Every hexagonal trefoil knot has exactly three quadrisecants.
\end{theorem}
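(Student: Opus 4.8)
The plan is to classify each quadrisecant by the set of edges it meets, and to show that general position together with Proposition~\ref{prop:huh-jeon} forces this set to be one of exactly three configurations, each of which carries a unique quadrisecant. First I would record the elementary consequences of general position. Since no three vertices of $K$ are collinear and no four are coplanar, a quadrisecant $L$ meets $K$ in finitely many points, and, after disposing of the degenerate possibility that $L$ contains an edge (which would either force a third collinear vertex or fail to produce four components), it meets at least four edges at interior points. The key local fact is that if $L$ meets two adjacent edges $e_{i-1\,i}$ and $e_{i\,i+1}$ at interior points, then these two points are distinct and both lie in the plane $\Pi_i$ containing $\Delta_i$, so the whole line $L$ lies in $\Pi_i$. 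From this I would deduce that $L$ cannot meet three consecutive edges: meeting $e_{i-1\,i}$, $e_{i\,i+1}$, $e_{i+1\,i+2}$ would place $L$ simultaneously in $\Pi_i$ and $\Pi_{i+1}$, whose intersection is the line carrying $e_{i\,i+1}$, and that line meets the two outer edges only at the shared vertices $v_i,v_{i+1}$, not at interior points. In particular $L$ cannot meet five or six edges, so it meets exactly four.

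Next I would run the short combinatorial count. Among the $\binom{6}{4}=15$ four-element subsets of the cyclically ordered edges, those containing no three consecutive edges are exactly the complements of an antipodal pair, namely $\{e_{12},e_{23},e_{45},e_{56}\}$, $\{e_{12},e_{34},e_{45},e_{61}\}$, and $\{e_{23},e_{34},e_{56},e_{61}\}$; these form a single orbit under the cyclic relabeling $v_i\mapsto v_{i+2}$, which preserves the intersection pattern of Proposition~\ref{prop:huh-jeon}. Each such subset consists of two disjoint adjacent pairs meeting at two antipodal \emph{apex} vertices; for instance $\{e_{12},e_{23},e_{45},e_{56}\}$ has apices $v_2$ and $v_5$. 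By the local fact above, a quadrisecant through this subset must lie in both $\Pi_2$ and $\Pi_5$, hence must be their line of intersection. This already produces at most one quadrisecant per subset, so at most three in all.

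For existence I would use Proposition~\ref{prop:huh-jeon} directly. For the subset $\{e_{12},e_{23},e_{45},e_{56}\}$, the point $X=\operatorname{int}e_{23}\cap\operatorname{int}\Delta_5$ lies on $e_{23}\subset\Pi_2$ and in $\Pi_5$, while $Y=\operatorname{int}e_{45}\cap\operatorname{int}\Delta_2$ lies on $e_{45}\subset\Pi_5$ and in $\Pi_2$; thus both $X$ and $Y$ lie on $\Pi_2\cap\Pi_5$, which is therefore the line $\ell$ through $X$ and $Y$ (in particular the two planes are not parallel). By construction $\ell$ already meets $\operatorname{int}e_{23}$ at $X$ and $\operatorname{int}e_{45}$ at $Y$, so it remains only to show that $\ell$ also crosses $\operatorname{int}e_{12}$ and $\operatorname{int}e_{56}$; equivalently, that inside $\Delta_2$ and inside $\Delta_5$ the line $\ell$ cuts off the apex vertices $v_2$ and $v_5$ rather than exiting across the chords $v_1v_3$ or $v_4v_6$.

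This last step is where I expect the real difficulty to lie, and it is the only point needing the full strength of Proposition~\ref{prop:huh-jeon}. Knowing merely that $\ell$ passes through an interior point of $\Delta_5$ guarantees it crosses two of the three sides, but not that those two sides are the knot-edges $e_{45}$ and $e_{56}$. To pin down the exit direction I would invoke the second family of piercings—that $e_{23}$ also pierces $\Delta_6$, that $\Delta_5$ and $\Delta_6$ share the side $e_{56}$, and that the listed intersections are the \emph{only} ones—to locate $X$ on the $e_{56}$-side of $\Delta_5$ and thereby force the crossing through $\operatorname{int}e_{56}$, and symmetrically through $\operatorname{int}e_{12}$ using the pierce point $Y$. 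Once this is established for the single representative subset, the cyclic symmetry $v_i\mapsto v_{i+2}$ delivers the analogous conclusion for the other two, so each of the three subsets carries exactly one quadrisecant and $K$ has exactly three.
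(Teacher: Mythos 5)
Your framework---at most one quadrisecant per admissible four-edge set because such a line must lie in both apex planes, the count of exactly three admissible sets, and the identification of two of the four required intersection points (your $X$ and $Y$, the paper's $q$ and $p$) directly from Proposition~\ref{prop:huh-jeon}---coincides with the paper's. (One secondary difference: you rule out three consecutive edges via general position, i.e.\ $\Pi_i\ne\Pi_{i+1}$, whereas the paper derives the non-coplanarity of three consecutive edges from nontriviality of the knot by a height-function argument, which is what the theorem's hypotheses actually supply.) The genuine gap is in the existence step, which you yourself flag as ``the real difficulty'': you never actually prove that $\ell=\Pi_2\cap\Pi_5$ meets $\operatorname{int}e_{12}$ and $\operatorname{int}e_{56}$. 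Your plan---to use the piercings $e_{23}\cap\Delta_6$ and $e_{45}\cap\Delta_1$ to ``locate $X$ on the $e_{56}$-side of $\Delta_5$''---is not a well-defined criterion, and in any case the side through which the line $XY$ exits $\Delta_5$ depends on the positions of both $X$ and $Y$, so no amount of information about $X$ alone can force the exit through $\operatorname{int}e_{56}$; moreover $e_{23}\cap\Delta_6$ lives in the plane $P_6$ and does not obviously constrain anything inside $P_5$.

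The paper closes this gap with a different device and with different data from the Proposition. To produce a point of $\ell$ on $\operatorname{int}e_{12}$ it suffices to show that $e_{12}$ crosses the \emph{plane} $P_5$, because any such crossing point automatically lies on $\ell$ (as $e_{12}\subset P_2$); there is no need to track where $\ell$ exits the triangle $\Delta_5$. Since $v_2\in P_5^{+}$, one only needs $v_1\in P_5^{-}$, and this follows because $\Delta_4$ minus $e_{45}$ lies in $P_5^{-}$, the piercing $e_{61}\cap\operatorname{int}\Delta_4$ therefore puts an interior point of $e_{61}$ in $P_5^{-}$, and $v_6\in P_5$, forcing $v_1\in P_5^{-}$. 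Symmetrically, the piercing $e_{61}\cap\operatorname{int}\Delta_3$ forces $v_6\in P_2^{-}$, so $e_{56}$ crosses $P_2$ at an interior point lying on $\ell$. Thus it is the piercings of $e_{61}$ through $\Delta_3$ and $\Delta_4$---not those of $e_{23}$ through $\Delta_6$ or of $e_{45}$ through $\Delta_1$---that do the work, and this half-space bookkeeping replaces your unproved planar exit-direction claim. You would need to supply an argument of this kind to complete the proof.
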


\begin{proof}
We first show that no three consecutive edges of $K$ are coplanar. Suppose that three consecutive edges of $K$, say $e_{12}, e_{23}$ and $e_{34}$, lie on a plane $P$. Then all vertices other than $v_5$ and $v_6$ lie on $P$. Therefore the height function of $K$ in a normal direction to $P$ has only one local maximum point and one local minimum point. Since such a knot is trivial, it contradicts that $K$ is a trefoil knot.

If $L$ is a quadrisecant of $K$, then there are four distinct edges of $K$ corresponding to four points of $K\cap L$. If any three of these edges are consecutive along $K$, then they are coplanar. Therefore, by the above argument, there are only three possible sets of four edges meeting $L$:
$$
\{e_{12},e_{23},e_{45},e_{56}\},\
\{e_{23},e_{34},e_{56},e_{61}\},\
\{e_{34},e_{45},e_{61},e_{12}\}.$$

We show that there exists exactly one quadrisecant in each of the above three cases.
We may assume that the vertices of $K$ are labeled so that the edges and the triangular disks of $K$ intersect as stated in Proposition~\ref{prop:huh-jeon}.

By cyclically relabeling the vertices of $K$, we only need to show that there exists exactly one quadrisecant meeting the edges $\{e_{12},e_{23},e_{45},e_{56}\}$.
Let $P_2$ and $P_5$ be the planes containing the triangular disks $\Delta_2$ and $\Delta_5$, respectively.
We show that $P_2\cap P_5$ is the quadrisecant we are seeking.

Notice that $e_{45}\cap P_2\ne\emptyset$ and $e_{23}\cap P_5\ne\emptyset$.
Let $p=e_{45}\cap P_2$ and $q=e_{23}\cap P_5$.
Then $P_2\cap P_5$ is the line through the two points $p$ and $q$.
Notice that the endpoints of $e_{23}$ lie on the opposite sides of $P_5$.
Let $P_5^{+}$ and $P_5^{-}$ be the open half spaces divided by $P_5$ containing $v_{2}$ and $v_{3}$, respectively. Since $v_{3}\in P_5^{-}$ and $v_{4},v_{5}\in P_5$, we see that $\Delta_{4}$ lies in $P_5^{-}$ except along $e_{45}$.
Since $e_{61}$ and $\Delta_{4}$ intersect in their interiors, we also know that $e_{61}$ lies in $P_5^{-}$ except at $v_{6}$, hence $v_{1}\in P_5^{-}$. Then, we see that $e_{12}$ intersects $P_5$ in its interior. Let $a = e_{12}\cap P_5$.
Then $a\in P_2\cap P_5$.

Notice that the endpoints of $e_{45}$ lie on the opposite sides of $P_2$. Let $P_2^{+}$ and $P_2^{-}$ be the open half spaces divided by $P_2$ containing $v_{5}$ and $v_{4}$, respectively.
Similarly as above, $\Delta_1$ lies in $P_2^-$ except along $e_{12}$. Since $e_{61}$ and $\Delta_3$ intersect in their interiors, we also know that $e_{61}$ lies in $P_2^-$ except at $v_1$, hence $v_6\in P_2^-$. Therefore we have another point $b = e_{56}\cap P_2 $ in $P_2\cap P_5$. This completes the proof.
\end{proof}

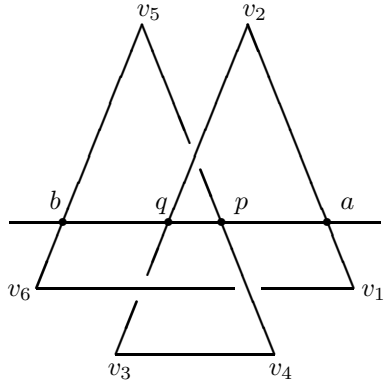
\begin{figure}[t]
\centering
\begin{picture}(150,140)(-75,-8)
\put(-70,50){\line(1,0){140}}
\put(-55,55){$b$}\put(-50,50){\circle*{3}}
\put(-15,55){$q$}\put(-10,50){\circle*{3}}
\put( 15,55){$p$}\put( 10,50){\circle*{3}}
\put( 55,55){$a$}\put( 50,50){\circle*{3}}
\thicklines
\put(-33, -8){$v_3$} \put(-30,0){\line(1,0){60}}
\put( 28, -8){$v_4$} \put(30,0){\line(-2,5){28}} \put(-20,125){\line(2,-5){18}}
\put(-22,128){$v_5$} \put(-20,125){\line(-2,-5){40}}
\put(-71, 22){$v_6$} \put(-60,25){\line(1,0){75}} \put(60,25){\line(-1,0){35}}
\put( 63, 22){$v_1$} \put(60,25){\line(-2,5){40}}
\put( 18,128){$v_2$} \put(20,125){\line(-2,-5){38}} \put(-30,0){\line(2,5){8}}
\end{picture}
\caption{$p,q,a,b\in P_2\cap P_5$}\label{fig:pqab}
\end{figure}

\section{Quadrisecant Approximation}
Let $K$ be a knot which has finitely many quadrisecants intersecting $K$ in finitely many points. The intersection points cut $K$ into finitely many subarcs. Straightening each subarc with its endpoints fixed, one obtains a polygonal closed curve which we will call the \emph{quadrisecant approximation\/} of $K$, denoted by $\hat K$.

Experiments on some knots with small crossings indicate that it might be true that $\hat K$ is actually a knot having the knot type of $K$~\cite{jin2005}. We show that the quadrisecant approximation $\hat K$ of a hexagonal trefoil knot $K$ is a trefoil knot of the same type and that the quadrisecants of $\hat K$ are those three of $K$ constructed in the proof of Theorem~\ref{thm:only-3}.

\begin{figure}[h]
\centering
\def\secant{
\put(0,0){\line(1,0){60}}
\put(9,0){\circle*{3}}\put(23,0){\circle*{3}}\put(37,0){\circle*{3}}\put(51,0){\circle*{3}}
}
\begin{picture}(60,60)(0,-50)
\secant
\qbezier(9,0)(16,20)(23,0)
\qbezier(23,0)(30,-20)(37,0)
\qbezier(37,0)(44,20)(51,0)
\qbezier(9,0)(2,-20)(9,-30)
\qbezier(9,-30)(15,-40)(30,-40)
\qbezier(51,-30)(45,-40)(30,-40)
\qbezier(51,0)(58,-20)(51,-30)
\put(20,-50){\small{simple}}

\end{picture}
\quad
\begin{picture}(60,60)(0,-50)
\secant
\qbezier(9,0)(16,20)(23,0)
\qbezier(37,0)(44,20)(51,0)
\qbezier(9,0)(2,-20)(9,-30)
\qbezier(23,0)(30,-20)(37,-30) \qbezier(37,-30)(44,-35)(51,-30)
\qbezier(37,0)(30,-20)(23,-30) \qbezier(23,-30)(16,-35)(9,-30)
\qbezier(51,0)(58,-20)(51,-30)
\put(18,-50){\small{flipped}}
\end{picture}
\quad
\begin{picture}(60,60)(0,-50)
\secant
\qbezier(9,0)(16,20)(37,0)
\qbezier(23,0)(44,20)(51,0)
\qbezier(9,0)(2,-20)(9,-30)
\qbezier(9,-30)(15,-40)(30,-40)
\qbezier(51,-30)(45,-40)(30,-40)
\qbezier(51,0)(58,-20)(51,-30)
\qbezier(37,0)(51,-10)(44,-20)
\qbezier(23,0)(9,-10)(16,-20)
\qbezier(16,-20)(30,-40)(44,-20)
\put(10,-50){\small{alternating}}
\end{picture}
\caption{Three types of quadrisecants}\label{fig:types of qsec}
\end{figure}
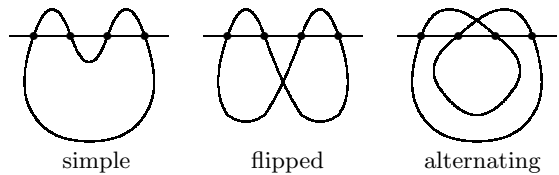

For any knot, a quadrisecant is one of the three types, \emph{simple\/}, \emph{flipped\/} or \emph{alternating\/}, according to the orders of the four intersection points along the quadrisecant and along the knot as indicated in Figure~\ref{fig:types of qsec}~\cite{BCSS}.
It is easily seen from the proof of Theorem~\ref{thm:only-3} and Figure~\ref{fig:pqab} that the following lemma holds.
\begin{lemma}\label{lem:alternating}
All quadrisecants of a hexagonal trefoil knot are alternating.
\end{lemma}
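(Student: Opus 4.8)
The plan is to verify the claim for a single quadrisecant and then invoke the cyclic symmetry already used in the proof of Theorem~\ref{thm:only-3}. By that theorem it suffices to treat the quadrisecant $L=P_2\cap P_5$, whose four intersection points with $K$ are $a=e_{12}\cap P_5$, $q=e_{23}\cap P_5$, $p=e_{45}\cap P_2$ and $b=e_{56}\cap P_2$; the other two quadrisecants are obtained from this one by cyclically relabeling the vertices, so they will automatically be of the same type. The whole argument then reduces to comparing the order of $a,q,p,b$ along $K$ with their order along $L$.

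First I would record the cyclic order along $K$. Traversing $v_1\to v_2\to\cdots\to v_6\to v_1$, the marked points lie on $e_{12},e_{23},e_{45},e_{56}$ and so are met in the order $a,q,p,b$. This step is immediate from the edges carrying the points and needs no computation.

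Next, and this is the only step requiring an argument, I would determine the order along the line $L$. Since $e_{45},e_{56}\subset P_5$ and $e_{12},e_{23}\subset P_2$, all four points in fact lie on $L=P_2\cap P_5$. Proposition~\ref{prop:huh-jeon} gives $q\in\operatorname{int}\Delta_5$ and $p\in\operatorname{int}\Delta_2$. Because $L$ lies in the plane $P_5$ and passes through the interior point $q$ of the triangle $\Delta_5$, it meets $\partial\Delta_5$ in exactly two points, which are forced to be $p\in\operatorname{int}e_{45}$ and $b\in\operatorname{int}e_{56}$; hence $q$ lies between $p$ and $b$ on $L$. Symmetrically, $L$ lies in $P_2$ and passes through the interior point $p$ of $\Delta_2$, meeting $\partial\Delta_2$ exactly at $a\in\operatorname{int}e_{12}$ and $q\in\operatorname{int}e_{23}$, so $p$ lies between $a$ and $q$. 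These two betweenness relations force the order $a,p,q,b$ along $L$, in agreement with Figure~\ref{fig:pqab}.

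Finally I would compare the two orders. Relabeling $a,p,q,b$ as $1,2,3,4$ according to their order along $L$, the order along $K$ becomes $1,3,2,4$, which is exactly the alternating pattern of Figure~\ref{fig:types of qsec}. Thus $L$ is alternating, and by the cyclic symmetry all three quadrisecants of $K$ are alternating. The main obstacle is the order-along-$L$ step: distinguishing the alternating type from the flipped type requires the exact order $a,p,q,b$ on $L$, since merely noting that one pair of the four subarcs interleaves does not separate these two types; the triangle-crossing betweenness argument above is what pins the order down unambiguously.
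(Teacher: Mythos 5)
Your proof is correct and follows essentially the same route as the paper, which simply reads off the order $b,q,p,a$ along $L=P_2\cap P_5$ from the construction in Theorem~\ref{thm:only-3} (Figure~\ref{fig:pqab}) and compares it with the order $a,q,p,b$ along $K$. Your only addition is to make explicit, via the two betweenness relations coming from $q\in\operatorname{int}\Delta_5$ and $p\in\operatorname{int}\Delta_2$, why the order along $L$ is what the figure depicts --- a detail the paper leaves to the reader.
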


\begin{theorem} If $K$ is a hexagonal trefoil knot, then its quadrisecant approximation $\hat K$ is also a trefoil knot that has the same knot type as $K$.
\end{theorem}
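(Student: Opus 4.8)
The plan is to realize the straightening as an ambient isotopy, so that $\hat K$ and $K$ have the same knot type automatically. First I would pin down the combinatorial type of $\hat K$. By Theorem~\ref{thm:only-3} each edge of $K$ lies on exactly two of the three quadrisecants, so each edge carries exactly two marked points in its interior, for twelve marked points in all. Hence $\hat K$ has twelve vertices and twelve edges of two kinds: six \emph{mid-edge} segments, one inside each $e_{i\,i+1}$ joining its two marked points, and six \emph{corner} chords, one near each $v_i$ joining the marked point of $e_{i-1\,i}$ nearest $v_i$ to the marked point of $e_{i\,i+1}$ nearest $v_i$. Thus $\hat K$ is a dodecagonal curve, and the only subarcs of $K$ actually bent by the straightening are the six short subarcs passing through the vertices $v_1,\ldots,v_6$.

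Next I would observe that the straightening is supported inside $\Delta_1\cup\cdots\cup\Delta_6$: the mid-edge subarcs are already straight and are left fixed, while straightening the short subarc through $v_i$ sweeps the triangle $T_i$ bounded by the two half-edges at $v_i$ and the corner chord, and $T_i$ is a subtriangle of $\Delta_i$. It therefore suffices to produce six \emph{empty-triangle} moves, one per vertex: concretely, to check that each $T_i$ meets $K$ only along the two half-edges it replaces, and that the six triangles are pairwise disjoint. Granting both, the moves compose (in any order) to an ambient isotopy, since inside each $\Delta_i$ the current knot agrees with $K$ throughout.

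The heart of the matter is the emptiness of each $T_i$. By Proposition~\ref{prop:huh-jeon}, the only part of $K$ meeting $\operatorname{int}\Delta_i$, apart from its two bounding edges, is a single transverse piercing point from exactly one other edge; for instance $\operatorname{int}\Delta_5$ is pierced only by $\operatorname{int}e_{23}$, at the point $q$ of Theorem~\ref{thm:only-3}. So the corner move at $v_i$ is empty precisely when this piercing point lies in the quadrilateral $\Delta_i\setminus T_i$ rather than in the small corner $T_i$ at $v_i$, and the disjointness of $T_i$ from $T_j$ reduces to the same kind of question on the shared or crossing data of $\Delta_i$ and $\Delta_j$. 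Establishing these facts is the main obstacle: it requires knowing, on each edge, the relative order along the edge of its two endpoints, its two marked points, and the piercing points of the adjacent disks. I would extract this ordering from the half-space containments already proved in Theorem~\ref{thm:only-3}---the very containments that force the order $b,q,p,a$ along the quadrisecant in Figure~\ref{fig:pqab}---which place each piercing point on the interior ``middle'' portion of its quadrisecant, hence off the corner triangle.

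Once every $T_i$ is shown to be empty and the $T_i$ are pairwise disjoint, the six corner moves assemble into an ambient isotopy carrying $K$ to $\hat K$, so $\hat K$ is a trefoil of the same knot type as $K$. In particular no crossing is ever changed, which is consistent with Lemma~\ref{lem:alternating}, since an empty-triangle move cannot disturb the alternating pattern of the three quadrisecants. I expect the bookkeeping of the previous paragraph---verifying that each piercing point falls on the far side of its corner chord---to be the only genuinely delicate step; the rest is formal once the twelve marked points have been located.
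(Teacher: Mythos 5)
Your architecture coincides with the paper's: six corner triangles inside the $\Delta_i$, emptiness via the single-piercing property of Proposition~\ref{prop:huh-jeon}, pairwise disjointness, and composition of the resulting triangle moves into an ambient isotopy. But the two verifications you defer as ``bookkeeping'' are essentially the entire content of the paper's proof, and one of them conceals an issue your proposal never raises. To know what the corner chord at $v_2$ even is, you must decide which of the two marked points on $e_{12}$ is the nearer one to $v_2$, and likewise on $e_{23}$; this matters for more than emptiness. If both endpoints of that chord lay on the same quadrisecant $L_1=P_2\cap P_5$, the chord would be a subsegment of $L_1$, and since $L_1$ is alternating (Lemma~\ref{lem:alternating}) that subsegment would contain a further marked point of $L_1$ --- in the notation of Figure~\ref{fig:pqab}, the point $p$ lies between $q$ and $a$ --- so $\hat K$ would have a self-intersection and would fail to be a knot at all. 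The paper rules this out by showing, from the half-space containments of Theorem~\ref{thm:only-3} combined with the alternating order along the \emph{third} quadrisecant, that the other marked point of $e_{12}$ lies strictly between $a$ and $v_2$, while the other marked point of $e_{23}$ lies on the far side of $q$ from $v_2$; ``extract this ordering from the half-space containments'' is a pointer to where this argument lives, not the argument.

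The second deferred step, pairwise disjointness of the six corner triangles, does not simply ``reduce to the same kind of question'': adjacent pairs are separated along their shared edge by the mid-edge segment, but non-adjacent pairs (for instance the triangles at $v_2$ and at $v_5$, whose ambient disks $\Delta_2$ and $\Delta_5$ are each genuinely pierced by an edge of $K$) require their own separation arguments, sorting the relevant marked points and vertices into $P_2^{+}$, $P_2^{-}$ and $P_2$ itself. The emptiness step you do locate correctly: the piercing point of $\Delta_2$ is $p=e_{45}\cap P_2$, which by the alternating order lies on the chord spanned by the two $L_1$-marked points of $\Delta_2$, hence outside the open corner triangle; that part of your sketch would go through once the edge orderings are in hand. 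So the plan is the right one and matches the paper's, but as written it asserts rather than proves the facts on which the theorem rests.
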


\begin{figure}[h]
\centering
\includegraphics[scale=0.8]{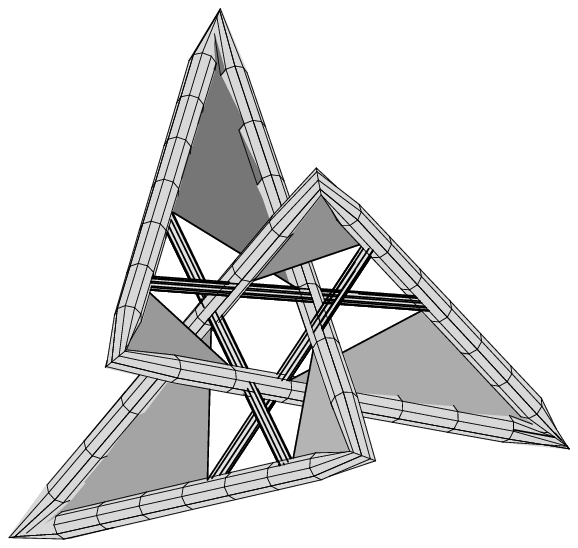}\qquad%
\includegraphics[scale=0.8]{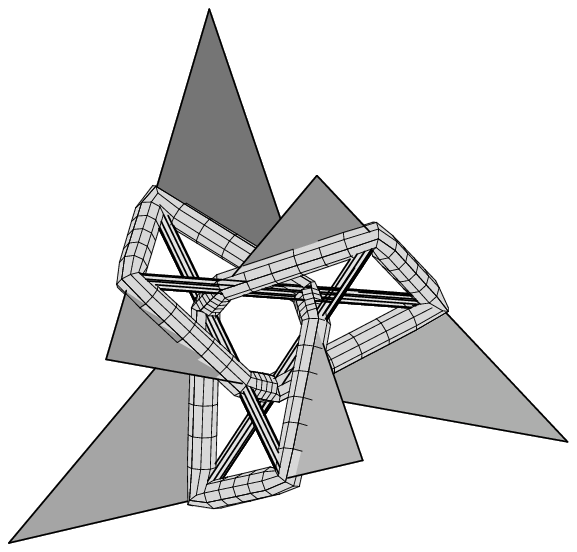}
\caption{Quadrisecants and quadrisecant appriximation of a hexagonal trefoil knot}\label{fig:corners}
\end{figure}

\begin{proof}
Let $K$ be a hexagonal trefoil knot whose vertices and edges are labeled so that it has the intersection pattern as described in Proposition~\ref{prop:huh-jeon}.
Let $L_{1},L_{2},L_{3}$ be the quadrisecants of $K$ corresponding to the set of edges
$$
\{e_{12},e_{23},e_{45},e_{56}\},\
\{e_{23},e_{34},e_{56},e_{61}\},\
\{e_{34},e_{45},e_{61},e_{12}\},$$
respectively. Let $p^k_{ij}$ denote the point $L_k\cap e_{ij}$ and let $s^k_i$ denote the subarc of $K$ which is the union of the segment of $e_{i\,i+1}$ from $v_i$ to $p^k_{i\,i+1}$ and the segment of $e_{i-1\,i}$ between $v_i$ and $p^k_{i-1\, i}$.

We first observe the quadrisecant $L_1=P_2\cap P_5$ and the edges $e_{12},e_{23},e_{45},e_{56}$. Notice that $K\cap L_1=\{p^1_{12},p^1_{23},p^1_{45},p^1_{56}\}$.
We consider $s^1_2$. If there is no $p_{ij}^{k}$'s in the interior of $s^1_2$, the quadrisecant approximation $\hat{K}$ has a self-intersection at $p_{45}^{1}$. So, we need to show that there exists one of the $p_{ij}^{k}$'s in
the interior of $s^1_2$.
Notice that $e_{34}$ lies in $P_{5}^{-}$ except at $v_{4}$. Thus, $p_{34}^{3}$, the intersection point of $L_{3}$ and $e_{34}$, lies in $P_{5}^{-}$. And note that $p_{45}^{3}$ lies on the plane $P_{5}$. By Lemma~\ref{lem:alternating}, $L_{3}$ is an alternating quadrisecant.
That is, along $L_{3}$, the order of the $p_{ij}^{3}$'s is $p_{12}^{3}p_{45}^{3}p_{61}^{3}p_{34}^{3}$. Thus, we know that $p_{12}^{3}$ lies in $P_{5}^{+}$. So, we know that $p_{12}^{3}$ lies on the intersection of $e_{12}$
and the interior of $s^1_2$.

Now, we need to show that there are no intersection points of
interior of $s^1_2$ and quadrisecants of $K$ except at $p_{12}^{3}$. Note that each of $e_{ij}$'s has exactly two
$p_{ij}^{k}$'s. So, it is not hard to see that $p_{23}^{2}$ is the only candidate for any additional intersection of $s^1_2$ and quadrisecants of $K$. We show that $p_{23}^{2}$ doesn't lie on
$s^1_2$.

By Lemma~\ref{lem:alternating}, along $L_{2}$, the order of $p_{ij}^{2}$'s is
$p_{56}^{2}p_{23}^{2}p_{61}^{2}p_{34}^{2}$.
Let $\overline{L_{2}}$ be a segment which has the end points
$p_{56}^{2}$ and $p_{34}^{2}$. Since $p_{34}^{2}$ lies on $e_{34}$
and $e_{34}$ lies in $P_{5}^{-}$, $p_{34}^{2}$ lies in $P_{5}^{-}$.
And note that $p_{56}^{2}$ lies on $P_{5}$. Thus, $p_{23}^{2}$, the
point which lies between $p_{56}^{2}$ and $p_{34}^{2}$, lies in
$P_{5}^{-}$. So, we know that $p_{23}^{2}$ does not lie on $s^1_2$.

Now, by a similar way, we can show that $p_{12}^{1}$,
$p_{34}^{3}$, $p_{34}^{2}$, $p_{56}^{2}$, $p_{56}^{1}$ are the only
points which lie on the interior of $s^3_{1}, s^2_{3}, s^3_{4}, s^1_{5},
s^2_{6}$, respectively.

Finally, let $t^k_{i}$ be the line segment joining
the end points of $s^k_{i}$. Let $\delta_{ik}$ be the triangle
which is bounded by $s^k_{i}$ and $t^k_{i}$. Since
$\operatorname{int}\Delta_{a}$ intersects only one edge of $K$ transversely and the
edge meets $t^k_{i}$, the interior of $\delta_{ik}$
does not meet $K$.

Let $\delta_{i}$ be the triangle with vertices $v_{i}$, $p_{i-1\,i}^{k}$ and $p_{i\,i+1}^{l}$ where $k$, $l$ are determined by the property:
\begin{quote} The segments $\overline{v_{i}p_{i-1\,i}^{k}}$ and $\overline{v_{i}p_{i\,i+1}^{l}}$ do not contain any quadrisecant point in their interior.
\end{quote}
Note that $\delta_{i}\subset \delta_{ik}$. Thus
$\delta_{i}$ does not meet $K$.\\
We consider the case of $\delta_{2}$. Note that $\Delta_{2}$ does
not meet $\operatorname{int}\Delta_{1}$, $\operatorname{int}\Delta_{3}$. Thus, we just
observe whether each of $\delta_{4}$, $\delta_{5}$, $\delta_{6}$ meets $\delta_{2}$ or not.
Note that $p_{45}^{3}$, $v_{4}$ and $p_{34}^{2}$ lie in $P_{2}^{-}$,
$p_{45}^{1}$ lies on $P_{2}$, $v_{5}$ and $p_{56}^{2}$ lie in
$P_{2}^{+}$. Since $p_{45}^{1}\nsubseteq\delta_{2}$, we have $(\delta_{4}\cup\delta_{5})\cap\delta_{2}=\emptyset$.

Since $\operatorname{int}\Delta_{3}$
lies on $P_{2}^{-}$ and $e_{61}$ intersects $\operatorname{int}\Delta_{3}$ transversely, we know
that $p_{61}^{2}$ and $v_{6}$ lie in $P_{2}^{-}$. And we also know
that $p_{56}^{1}$ lies on $P_{2}$ but $p_{56}^{1}\nsubseteq\delta_{2}$. So, $\delta_{6}$
does not meet $\delta_{2}$. Thus, we conclude
that $\delta_{2}$ does not meet
$\delta_{i}$.($i\neq2$).
By the similar way, we say that if $i\neq j$,
$\delta_{i}$ does not meet
$\delta_{j}$.
So, we conclude that the quadrisecant approximation
$\hat{K}$ of $K$ has the same knot type of
$K$. Thus, $\hat{K}$ is a trefoil knot.
\end{proof}

\begin{theorem}\label{thm:no more qsec}
 If $K$ is a hexagonal trefoil knot, then the quadrisecants of the quadrisecant approximation $\hat K$ are just the three quadrisecants of $K$.
\end{theorem}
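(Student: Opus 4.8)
The plan is to prove the two inclusions separately: that each $L_k$ is a quadrisecant of $\hat K$, and that $\hat K$ has no other quadrisecant. For the first inclusion, note that the twelve points $p^k_{ij}$ are exactly the vertices of $\hat K$, and each $L_k$ passes through four of them ($L_1$ through $p^1_{12},p^1_{23},p^1_{45},p^1_{56}$, and the others by cyclic relabeling). Since $L_1=P_2\cap P_5$ is not the carrier line of any edge of $K$ (each edge-line is some $P_i\cap P_{i+1}$, whereas $\{2,5\}$ is not a pair of adjacent indices), and by general position it contains no corner chord of $\hat K$, the line $L_1$ contains no edge of $\hat K$. Hence those four vertices are four distinct components of $\hat K\cap L_1$, so $L_1$ is a quadrisecant of $\hat K$; the same applies to $L_2,L_3$.

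The crux for the second inclusion is a coplanarity observation. The edges of $\hat K$ come in two kinds: the \emph{middle} segments lying along the edges $e_{i\,i+1}$ of $K$, and the \emph{corner} chords replacing each corner arc at $v_i$, each of which is a chord of the triangular disk $\Delta_i$ and hence lies in $P_i$. Consequently the three consecutive edges of $\hat K$ around $v_i$ --- the middle segment on $e_{i-1\,i}$, the corner chord at $v_i$, and the middle segment on $e_{i\,i+1}$ --- all lie in $P_i$, so any line $L$ with $L\not\subset P_i$ meets their union in at most one point.

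I would then classify a quadrisecant $L$ of $\hat K$ according to whether $L\subset P_i$ for some $i$. If $L\not\subset P_i$ for every $i$, the observation forces each of the six planes to contribute at most one point of $\hat K\cap L$; checking the coincidences on the middle segments (which lie on $P_i\cap P_{i+1}$) shows the intersection points occupy disjoint plane slots. If all four intersection points lie on middle segments, then, since the middle segments are subarcs of the edges of $K$ and their endpoints are still interior to those edges, $L$ meets four distinct edges of $K$ in their interiors and is therefore a quadrisecant of $K$; by Theorem~\ref{thm:only-3} it is one of $L_1,L_2,L_3$. Thus any putative \emph{new} quadrisecant must meet the relative interior of some corner chord, i.e. a point of $\operatorname{int}\Delta_i$.

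The remaining work is to exclude a quadrisecant meeting $\operatorname{int}\Delta_i$, and this is where I expect the main obstacle to lie. Unlike in Theorem~\ref{thm:only-3}, three consecutive edges of $\hat K$ are now coplanar, so the quick ``no three consecutive edges are coplanar'' dichotomy is unavailable; moreover the coplanarity bound by itself permits a line to meet several corner chords, so it does not finish the job. Here I would invoke the half-space data established in the proofs of Theorem~\ref{thm:only-3} and of the preceding theorem --- the positions of the vertices $v_j$ and of the points $p^k_{ij}$ relative to the half-spaces $P_\ell^{\pm}$, together with the fact from Proposition~\ref{prop:huh-jeon} that each $\operatorname{int}\Delta_i$ is crossed by exactly one edge of $K$ --- to show that the four intersection points of such an $L$ cannot simultaneously satisfy these containments unless $L$ already coincides with one of $L_1,L_2,L_3$. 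Finally, in the case $L\subset P_i$, the set $\hat K\cap P_i$ consists of the three coplanar edges together with the finitely many controlled punctures of $P_i$ by the other edges, and a direct check shows the only line in $P_i$ meeting $\hat K$ in four components is $P_i\cap P_{i+3}$, again one of $L_1,L_2,L_3$. Combining the cases yields that the quadrisecants of $\hat K$ are exactly $L_1,L_2,L_3$.
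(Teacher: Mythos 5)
Your setup matches the paper's reduction step for step: the observation that the three consecutive edges of $\hat K$ around $v_i$ (the paper's $O_{i-1}$, $N_i$, $O_i$) all lie in $P_i$ is exactly the paper's first lemma, forcing any new quadrisecant that meets two adjacent edges of $\hat K$ to lie in some $P_i$; your case $L\subset P_i$ is the paper's second lemma; and your use of Theorem~\ref{thm:only-3} to dispose of a line meeting four middle segments parallels the paper's Corollary~\ref{cor:Oi-1Oi} and the opening of its final lemma. Your first inclusion (that $L_1,L_2,L_3$ survive as quadrisecants of $\hat K$) is a point the paper leaves implicit, and your argument for it is fine. Up to the reduction, the proposal is sound.

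The genuine gap is that you stop exactly where the real work begins. Excluding a line, contained in no $P_i$, that meets one or more corner chords $N_i$ in their relative interiors is the bulk of the paper's proof: it requires an exhaustive enumeration of which combinations of edges of $\hat K$ such a line could meet --- the paper works through the patterns $N_i-N_j-N_k$ (seven index triples up to order-3 cyclic relabeling) and $N_i-N_j-O_k-O_l$ (three configurations up to relabeling), records for each one on which side of each relevant plane $P_i$ the other edges lie (using the intersection pattern of Proposition~\ref{prop:huh-jeon}), and then verifies that no linear ordering of the intersection points along a single line is consistent with all of those side conditions at once. Your proposal replaces all of this with the sentence that you ``would invoke the half-space data \dots to show that the four intersection points cannot simultaneously satisfy these containments.'' That names the right kind of input but does not identify the finitely many cases, does not extract the needed side-of-plane facts, and does not carry out the consistency check; the same applies to the ``direct check'' asserted for the case $L\subset P_i$. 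Since this case analysis is where the theorem could fail and where essentially all of the content lies, what you have is an outline of the reduction together with an acknowledged placeholder for the main argument, not a proof.
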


The proof of Theorem~\ref{thm:no more qsec} is a combination of the lemmas and corollaries that follow. Recall that $P_i$ is the plane determined by the vertices $v_{i-1}$, $v_i$ and $v_{i+1}$. Let $O_i$ denote the edge of $\hat K$ which is contained in $e_{i\,i+1}$, and let $N_i$ denote the edge of $\hat K$ joining $O_{i-1}$ and $O_i$. $\hat K$ has twelve vertices $v_{i-1\,i}=O_{i-1}\cap N_i$ and $v_{i\,i}=N_i\cap O_i$.

Counting the number of adjacent pairs of edges among four edges meeting a quadrisecant in four distinct points, there are three types of quadrisecants.

\begin{figure}[h]
\centering
\includegraphics[scale=0.5]{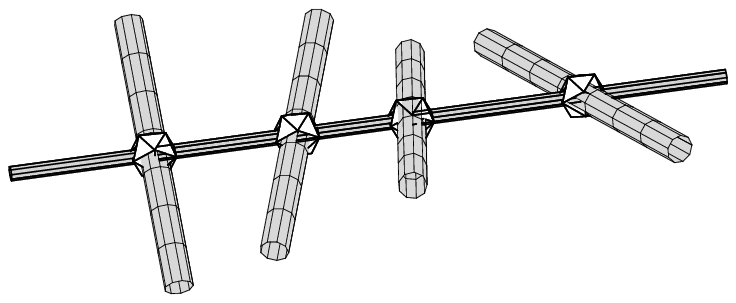}\quad
\includegraphics[scale=0.5]{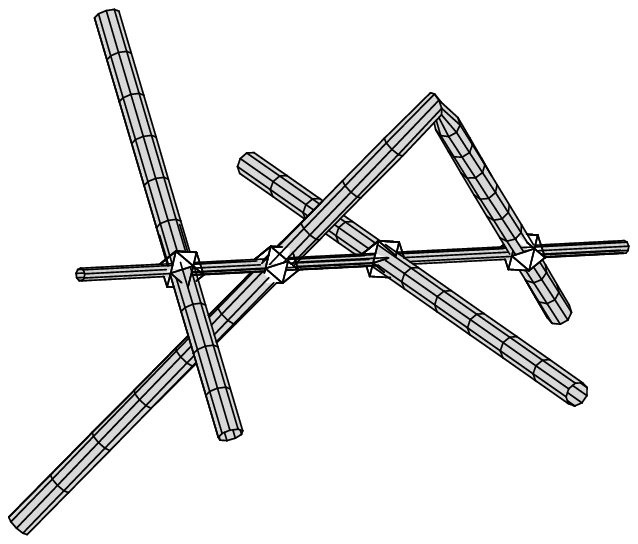}\quad
\includegraphics[scale=0.5]{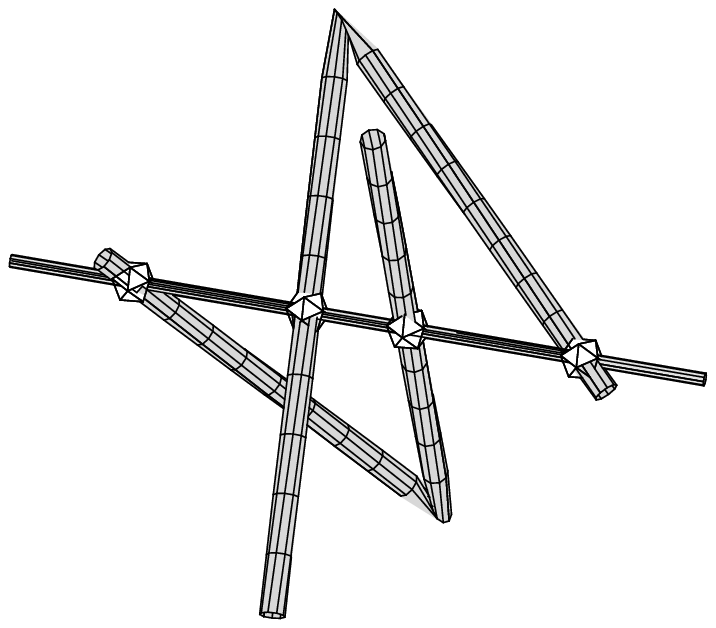}
\caption{Quadrisecants of type-0, type-1 and type-2}
\end{figure}

If a quadrisecants intersects a polygonal knot at one or more vertices, then it can have more than one types.

\begin{lemma}
If there exists a new quadrisecant of type-1 or type-2 of $\hat K$, then it lies on at least one of the $P_i$'s.
\end{lemma}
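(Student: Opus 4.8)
The plan is to reduce the statement to a single geometric observation: every pair of \emph{adjacent} edges of $\hat K$ is coplanar, and in fact lies in one of the triangle planes $P_i$. Granting this, a type-1 or type-2 quadrisecant --- which by the very definition of its type meets at least one adjacent pair of edges of $\hat K$ --- is forced to lie in the corresponding $P_i$, because a line that crosses two coplanar edges at two distinct points must lie in their common plane.

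First I would record where the edges of $\hat K$ sit relative to the planes $P_i$. The edges alternate around $\hat K$ in the cyclic pattern $\dots,O_{i-1},N_i,O_i,N_{i+1},\dots$, so each $O$-edge is flanked by two $N$-edges and vice versa, and every adjacent pair consists of exactly one $N$-edge together with one neighboring $O$-edge. Each long edge satisfies $O_i\subset e_{i\,i+1}$, and since $e_{i\,i+1}$ is a common edge of the triangular disks $\Delta_i$ and $\Delta_{i+1}$, we get $O_i\subset P_i\cap P_{i+1}$. Each corner edge $N_i$ has its two endpoints $v_{i-1\,i}$ and $v_{i\,i}$ lying on $e_{i-1\,i}$ and $e_{i\,i+1}$ respectively; both of these are edges of $\Delta_i$, so both endpoints lie in $P_i$, and hence $N_i\subset P_i$.

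Next I would combine these facts to obtain the coplanarity of adjacent pairs. For the pair $(O_{i-1},N_i)$ we have $O_{i-1}\subset e_{i-1\,i}\subset P_i$ and $N_i\subset P_i$, so both edges lie in $P_i$; likewise for $(N_i,O_i)$ we have $O_i\subset e_{i\,i+1}\subset P_i$ and $N_i\subset P_i$. Thus every adjacent pair of edges of $\hat K$ lies in the plane $P_i$ indexed by the common corner $N_i$ of the pair. To finish, let $L$ be a new quadrisecant of type-1 or type-2. By the definition of type, at least one of the four edges meeting $L$ is adjacent to another; fix such an adjacent pair. The quadrisecant meets its two edges in two of the four distinct points of $\hat K\cap L$, so these two points are distinct and, by the previous step, both lie in a single plane $P_i$. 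A line through two distinct points of a plane lies in that plane, so $L\subset P_i$, as claimed.

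The only delicate point will be the bookkeeping at the twelve vertices of $\hat K$: as the remark preceding the lemma warns, a quadrisecant passing through a vertex can be assigned more than one type, so I must ensure that the adjacent pair I select really is met in two \emph{distinct} points --- for instance, by choosing the assignment of the four intersection points to edges so that the shared vertex of the pair is not the sole point realizing both of its edges. Once the two points are genuinely distinct, the coplanarity argument applies verbatim. This vertex case-checking is the main, and essentially the only, obstacle.
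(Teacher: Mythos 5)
Your proposal is correct and follows essentially the same route as the paper: the paper's (two-sentence) proof likewise observes that every adjacent pair of edges of $\hat K$ is either $O_{i-1}\cup N_i$ or $N_i\cup O_i$ and is contained in $P_i$, so any line meeting such a pair in two distinct points lies in $P_i$. Your version simply spells out the containments $N_i\subset P_i$ and $O_{i-1},O_i\subset P_i$ and flags the vertex-incidence caveat explicitly.
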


\begin{proof}
The two adjacent edges of $\hat K$ are either $O_{i-1}\cup N_i$ or $N_i\cup O_i$. As they are contained in $P_i$, the lemma holds.
\end{proof}

\begin{lemma}
For any $P_i$, there is no new quadrisecant lying on it.
\end{lemma}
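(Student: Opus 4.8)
The plan is to use the symmetry of the configuration to reduce to a single plane and then to describe $\hat K\cap P_i$ completely. Each plane $P_i$ contains exactly one of the three quadrisecants; for instance both $P_2$ and $P_5$ contain $L_1=P_2\cap P_5$. Since the three quadrisecant edge-sets, and hence the six planes together with the intersection pattern of Proposition~\ref{prop:huh-jeon}, are permuted among themselves by relabeling the vertices, it suffices to prove that $L_1$ is the only quadrisecant of $\hat K$ lying in $P_2$; the other five planes are then handled by the identical argument after relabeling.

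First I would determine $\hat K\cap P_2$ exactly. The three consecutive edges $O_1\subset e_{12}$, $N_2$ and $O_2\subset e_{23}$ lie in $P_2$, because $e_{12}$ and $e_{23}$ are edges of $\Delta_2$ and $N_2$ is a chord of $\Delta_2$; together they form a polygonal arc $A$ from $v_{1\,1}$ to $v_{2\,3}$ through $v_{1\,2}$ and $v_{2\,2}$. For each of the remaining nine edges I would record on which side of $P_2$ its endpoints lie, reusing the half-space computation already carried out in the proof that $\hat K$ is a trefoil (the positions of $v_4,v_5,v_6$ and of the points $p_{ij}^k$ relative to $P_2^{+}$ and $P_2^{-}$). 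This shows that $O_3,N_4,O_6$ lie in $P_2^{-}$, that $N_5,O_5$ lie in $\overline{P_2^{+}}$, and that every remaining edge meets $P_2$ only at a single endpoint. Collecting the edges incident at each vertex then shows that $\hat K$ crosses $P_2$ transversally exactly at the two points $p_{45}^{1}=v_{4\,5}$ (between $O_4\subset\overline{P_2^{-}}$ and $N_5$) and $p_{56}^{1}=v_{5\,6}$ (between $O_5$ and $N_6\subset\overline{P_2^{-}}$), so that $\hat K\cap P_2=A\cup\{p_{45}^{1},p_{56}^{1}\}$ with the two extra points lying off $A$.

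The final step is a convexity argument. The four points $v_{1\,1},v_{1\,2}$ on $e_{12}$ and $v_{2\,2},v_{2\,3}$ on $e_{23}$ are in convex position, since $e_{12}$ and $e_{23}$ meet at $v_2$ beyond $v_{1\,2}$ and $v_{2\,2}$; hence $A$ lies on the boundary of a convex quadrilateral and any line in $P_2$ meets $A$ in at most two points, the only exception being the degenerate lines that contain a whole edge of $A$, which meet $A$ in one connected piece and so yield too few components to be a quadrisecant. Consequently a quadrisecant $\ell\subset P_2$ of $\hat K$ has at most two of its four intersection points on $A$ and must therefore pass through both $p_{45}^{1}$ and $p_{56}^{1}$. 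These two points are distinct and determine a unique line, namely $L_1=P_2\cap P_5$. Hence $\ell=L_1$, so no new quadrisecant lies in $P_2$, and by symmetry the same conclusion holds for every $P_i$.

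I expect the main obstacle to be the second step: assembling the complete intersection $\hat K\cap P_2$ requires positioning all twelve edges of $\hat K$ relative to $P_2$ and, in particular, verifying that no edge crosses $P_2$ transversally in its interior, so that the only intersections beyond the arc $A$ are the two vertices $v_{4\,5}$ and $v_{5\,6}$. Once this picture is secured, the convexity of $A$ together with the uniqueness of the line through two points finishes the proof at once. The one remaining nuisance, easily removed by general position, is to confirm that $p_{45}^{1}$ and $p_{56}^{1}$ do not happen to lie on the chord $N_2$ and so are genuinely off $A$.
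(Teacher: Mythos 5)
Your proof is correct and follows essentially the same route as the paper's: a line in $P_i$ meets the in-plane arc $O_{i-1}\cup N_i\cup O_i$ in at most two components, so a quadrisecant lying in $P_i$ must pass through the two remaining points of $\hat K\cap P_i$, which both lie on the old quadrisecant $L_k$ and hence determine $L_k$ itself. Your explicit determination of $\hat K\cap P_2$ and the convexity argument for the arc merely supply details that the paper's two-sentence proof leaves implicit.
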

\begin{proof}
First, there is no line meeting $O_{i-1}$, $N_i$ and $O_i$ except at $v_{i\,i}$ and $v_{i-1\,i}$. Next, note that there is no new quadrisecant meeting two points among the four points lying on the same quadrisecant $L_k$ of $K$.
\end{proof}

By the two previous lemmas, we have the following corollaries.

\begin{corollary}
There is no new quadrisecant of type-1 or type-2 for $\hat K$.
\end{corollary}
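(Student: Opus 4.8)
The plan is to read this corollary off from the two preceding lemmas by a single contradiction step, with no independent machinery needed. First I would unwind the definition of the types: a quadrisecant of type-1 or type-2 is precisely one for which at least one pair among the four edges it meets is adjacent along $\hat K$. So I would let $L$ be a hypothetical new quadrisecant of type-1 or type-2 and fix one such adjacent pair of edges that $L$ crosses.

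Applying the first lemma to this adjacent pair, I conclude that $L$ must lie on one of the planes $P_i$. Concretely, every adjacent pair of edges of $\hat K$ has the form $O_{i-1}\cup N_i$ or $N_i\cup O_i$, and both of these sit inside the single plane $P_i$; since $L$ meets the pair in two distinct points, $L$ is a line through two points of $P_i$ and therefore lies in $P_i$. This places $L$ on one of $P_1,\ldots,P_6$. I would then invoke the second lemma, which asserts that no new quadrisecant lies on any $P_i$. The two conclusions are incompatible, so no new quadrisecant of type-1 or type-2 can exist.

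I expect no genuine obstacle here, since the corollary is a formal consequence of the two lemmas; the only thing that needs care is the bookkeeping that ``type-1 or type-2'' really does guarantee an adjacent pair of edges to feed into the first lemma, which is immediate from the way the types are defined. The substantive difficulty of the ambient theorem lies entirely in the remaining type-0 case, where the four edges meeting the quadrisecant contain no adjacent pair and hence $L$ need not lie on any $P_i$; that case falls outside the scope of this corollary and would be handled by a separate argument.
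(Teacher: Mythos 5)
Your proposal is correct and matches the paper exactly: the paper derives this corollary directly from the two preceding lemmas (a type-1 or type-2 quadrisecant must lie on some $P_i$, but no new quadrisecant lies on any $P_i$), which is precisely your contradiction argument. No further comment is needed.
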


\begin{corollary}\label{cor:Oi-1Oi}
There is no new quadrisecant of type-0 meeting $O_{i-1}$ and $O_i$.
\end{corollary}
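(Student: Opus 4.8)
The plan is to reduce this statement to the immediately preceding lemma (that no new quadrisecant lies on any $P_i$), by showing that any quadrisecant meeting both $O_{i-1}$ and $O_i$ is forced to lie on $P_i$. The crucial observation I would record first is a coplanarity fact: both edges $O_{i-1}$ and $O_i$ of $\hat K$ lie in the plane $P_i$. Indeed, $O_{i-1}$ is contained in $e_{i-1\,i}$ and $O_i$ is contained in $e_{i\,i+1}$, and the endpoints $v_{i-1},v_i$ of $e_{i-1\,i}$ together with the endpoints $v_i,v_{i+1}$ of $e_{i\,i+1}$ all lie among $v_{i-1},v_i,v_{i+1}$, which are exactly the three points determining $P_i$. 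Hence $e_{i-1\,i}\subset P_i$ and $e_{i\,i+1}\subset P_i$, and therefore $O_{i-1},O_i\subset P_i$.

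Next I would argue by contradiction. Suppose $L$ is a new type-0 quadrisecant of $\hat K$ meeting both $O_{i-1}$ and $O_i$, and set $x=L\cap O_{i-1}$ and $y=L\cap O_i$. Since $O_{i-1}$ and $O_i$ are subsegments of the distinct edges $e_{i-1\,i}$ and $e_{i\,i+1}$, whose only common point is $v_i$, and since neither $O_{i-1}$ nor $O_i$ contains $v_i$ (each is the middle subsegment cut off by the two quadrisecant points of $K$ lying on its edge), the two segments are disjoint. Consequently $x\ne y$.

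Then, as $x$ and $y$ are two distinct points of $L$ both lying in $P_i$, the entire line $L$ must lie in $P_i$. This contradicts the previous lemma, which asserts that no new quadrisecant lies on $P_i$. Hence no such $L$ exists, which proves the corollary.

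I do not expect a genuine obstacle here: the argument is essentially the coplanarity observation combined with the preceding lemma. The one point requiring care is verifying that the two intersection points are distinct — equivalently that $O_{i-1}$ and $O_i$ are disjoint — since it is precisely the distinctness of $x$ and $y$ that pins $L$ inside $P_i$ rather than allowing $L$ to meet $P_i$ in a single point and escape the conclusion.
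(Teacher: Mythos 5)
Your proposal is correct and matches the paper's (very terse) justification: the paper derives this corollary directly from the observation in the preceding lemmas that $O_{i-1}$ and $O_i$ both lie in $P_i$, so any line meeting both lies in $P_i$, where no new quadrisecant exists. You simply make explicit the disjointness of $O_{i-1}$ and $O_i$ (hence the two distinct intersection points forcing $L\subset P_i$), which the paper leaves implicit.
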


\begin{lemma}
There is no new quadrisecant of type-0 for $\hat K$.
\end{lemma}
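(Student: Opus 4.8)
The plan is to reduce, using the preceding corollary and a short combinatorial count, to showing that no common transversal exists for the few configurations of four non-adjacent edges that survive, and then to rule those out with the half-space data already assembled in the proof of Theorem~\ref{thm:only-3}.

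First I would bound how many long edges $O_i$ a hypothetical new quadrisecant $L$ can meet. If $L$ met four of the $O_i$, then, since $O_i\subset e_{i\,i+1}\subset K$, it would be a quadrisecant of $K$, hence one of $L_1,L_2,L_3$ by Theorem~\ref{thm:only-3}, so not new. In the cyclic order $O_1,\ldots,O_6$ the pair $O_{i-1},O_i$ is consecutive, and Corollary~\ref{cor:Oi-1Oi} forbids a new type-0 quadrisecant from meeting any such pair; as an independent set in the $6$-cycle has at most three elements and the only independent $3$-sets are $\{O_1,O_3,O_5\}$ and $\{O_2,O_4,O_6\}$, and every $N_j$ is adjacent along $\hat K$ to one of the three chosen edges, meeting three $O_i$ leaves no room for a fourth non-adjacent edge. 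Hence $L$ meets at most two of the $O_i$, and therefore at least two of the corner edges $N_i$.

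For the remaining configurations I would exploit that every edge of $\hat K$ lies on at least one plane $P_j$: $O_i\subset P_i\cap P_{i+1}$ and $N_i\subset\Delta_i\subset P_i$. The proof of Theorem~\ref{thm:only-3} fixes, for the pair $P_2,P_5$, the side of each plane on which every vertex lies, and the order-$3$ symmetry $i\mapsto i+2$ of the pattern of Proposition~\ref{prop:huh-jeon} propagates this to a full table of signs of each $v_i$ against each $P_j$. The endpoints of each $N_i$ are vertices of $\hat K$, that is, quadrisecant points; since $L_1\subset P_2\cap P_5$, $L_2\subset P_3\cap P_6$ and $L_3\subset P_1\cap P_4$, each such point lies on two prescribed planes, which pins $N_i$ into a closed half-space of every $P_j$ it does not lie on. Now, as no new quadrisecant lies on any $P_j$, each plane containing one of the four edges meets $L$ in a single point lying on that edge, and the signs of the other three edges force their three intersection points to the corresponding sides, determining their order along $L$ relative to the point on $P_j$. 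For the representative set $\{O_1,N_3,O_4,N_6\}$ the planes $P_2,P_4,P_5$ force the order $O_1,O_4,N_3,N_6$ up to the positions of the two $N$-points, while $P_3$ places the $N_3$-point between the $O$-pair and the $N_6$-point and $P_6$ places the $N_6$-point between the $O$-pair and the $N_3$-point; these are incompatible, so no such line exists.

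The main obstacle is completing this for every surviving configuration rather than the single symmetric one above: after the reduction there remain the cases with two, three, or four of the $N_i$, the $3$- and $4$-$N$ families being the most numerous, and each must be confronted with the sign table. I would organize the work by the three-fold symmetry, which collapses the candidates to a handful of representatives, and the delicate recurring point is the location of each $N_i$ relative to the planes it does not lie on: this one-sidedness is not visible from $v_i$ alone but must be read off from the fact that the endpoints of $N_i$ are quadrisecant points constrained to the planes cut out by $L_1,L_2,L_3$, which forces $N_i$ into the required half-space.
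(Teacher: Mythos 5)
Your proposal follows essentially the same route as the paper: the identical reduction via Corollary~\ref{cor:Oi-1Oi} together with the observation that every $N_j$ is adjacent to one of $O_1,O_3,O_5$ (resp.\ $O_2,O_4,O_6$), leaving only configurations with at least two corner edges $N_i$, which are then ruled out by incompatible orderings of the four intersection points along the candidate line, forced by the planes $P_i\supset N_i$ and the half-space data from Theorem~\ref{thm:only-3}; your worked example $\{O_1,N_3,O_4,N_6\}$ reproduces the paper's $(O_1O_4N_3N_6)$ diagram, including the decisive clash between $P_3$ and $P_6$. The only difference is completeness: the exhaustive verification you defer to the ``sign table'' is precisely the body of the paper's proof, namely its eleven case diagrams (eight $N_i$--$N_j$--$N_k$ triples up to order-3 relabeling and three $O_kO_lN_iN_j$ configurations), each presented with the same one-sidedness data and the same ``one can verify'' appeal you describe.
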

\begin{proof}
By Corollary~\ref{cor:Oi-1Oi}, we know that there is no quadrisecant of type-0 meeting $O_{i-1}$ and $O_i$.
Thus, if there exists a quadrisecant of type-0 meeting $O_{i}-O_{j}-O_{k}$, then it must be $O_{1}-O_{3}-O_{5}$ or $O_{2}-O_{4}-O_{6}$.
Note that $O_{1}$ meets $N_{6}$ and $N_{1}$, $O_{3}$ meets $N_{2}$ and $N_{3}$ and $O_{5}$ meets $N_{4}$ and $N_{5}$. So, there is no quadrisecant of type-0 meeting $O_{1}-O_{3}-O_{5}$.
Similarly, there is no quadrisecant of type-0 meeting $O_{2}-O_{4}-O_{6}$.
Hence, there is no quadrisecant of type-0 meeting $O_{i}-O_{j}-O_{k}$.

We only need to check the cases meeting $N_{i}-N_{j}-N_{k}$ or $N_{i}-N_{j}-O_{k}-O_{l}$.

\medskip
\noindent{\textsc{Case 1.}} $N_{i}-N_{j}-N_{k}$ is not possible for any pairwise distinct triple $\{i,j,k\}$.

The following diagram indicates the case $\{i,j,k\}=\{1,2,3\}$ (and its order 3 cyclic relabelings $\{3,4,5\}$ and $\{5,6,1\}$). The edges $N_j$ and $N_k$ are on the same side of $P_i$. Except in the case of $N_1$ and $P_3$, the edges $N_j$, $N_k$ have one endpoint on $P_i$. Notice that $N_i$ is contained in $P_i$. One can verify that there is no proper ordering of the intersections of the edges $N_i,N_j,N_k$ along a line satisfying all the three parts of the diagram.

\centerline{
\begin{picture}(300,60)(-40,0)
\put(-50,4){$(1,2,3)$}
\put(20,0){\line(0,1){30}} \put(17,34){$N_2$}
\put(50,0){\line(0,1){30}} \put(47,34){$N_3$}
\put(110,0){\line(0,1){30}} \put(107,34){$N_1$}
\put(140,0){\line(0,1){30}} \put(137,34){$N_3$}
\put(200,0){\line(0,1){30}} \put(197,34){$N_2$}
\put(230,10){\line(0,1){20}} \put(227,34){$N_1$}
\thicklines
\put(0,4){$P_1$} \put(0,0){\line(1,0){70}}
\put(90,4){$P_2$} \put(90,0){\line(1,0){70}}
\put(180,4){$P_3$} \put(180,0){\line(1,0){70}}
\end{picture}
}

\medskip
For the other cases (and their order 3 cyclic relabelings), we only show diagrams.
\medskip
\centerline{
\begin{picture}(300,90)(-40,-40)
\put(-50,4){$(1,2,4)$}
\put(20,0){\line(0,1){30}} \put(17,34){$N_2$}
\put(50,0){\line(0,-1){30}} \put(47,-39){$N_4$}
\put(110,0){\line(0,1){30}} \put(107,34){$N_1$}
\put(140,10){\line(0,1){20}} \put(137,34){$N_4$}
\put(200,0){\line(0,1){30}} \put(197,34){$N_1$}
\put(230,0){\line(0,-1){30}} \put(227,-39){$N_2$}
\thicklines
\put(0,4){$P_1$} \put(0,0){\line(1,0){70}}
\put(90,4){$P_2$} \put(90,0){\line(1,0){70}}
\put(180,4){$P_4$} \put(180,0){\line(1,0){70}}
\end{picture}
}

\centerline{
\begin{picture}(300,90)(-40,-40)
\put(-50,4){$(1,2,5)$}
\put(20,0){\line(0,1){30}} \put(17,34){$N_2$}
\put(50,10){\line(0,1){20}} \put(47,34){$N_5$}
\put(110,0){\line(0,1){30}} \put(107,34){$N_5$}
\put(140,00){\line(0,-1){30}} \put(137,-39){$N_1$}
\put(200,0){\line(0,1){30}} \put(197,34){$N_2$}
\put(230,0){\line(0,-1){30}} \put(227,-39){$N_1$}
\thicklines
\put(0,4){$P_1$} \put(0,0){\line(1,0){70}}
\put(90,4){$P_2$} \put(90,0){\line(1,0){70}}
\put(180,4){$P_5$} \put(180,0){\line(1,0){70}}
\end{picture}
}

\centerline{
\begin{picture}(300,60)(-40,0)
\put(-50,4){$(1,2,6)$}
\put(20,0){\line(0,1){30}} \put(17,34){$N_2$}
\put(50,0){\line(0,1){30}} \put(47,34){$N_6$}
\put(110,0){\line(0,1){30}} \put(107,34){$N_1$}
\put(140,0){\line(0,1){30}} \put(137,34){$N_6$}
\put(200,0){\line(0,1){30}} \put(197,34){$N_1$}
\put(230,10){\line(0,1){20}} \put(227,34){$N_2$}
\thicklines
\put(0,4){$P_1$} \put(0,0){\line(1,0){70}}
\put(90,4){$P_2$} \put(90,0){\line(1,0){70}}
\put(180,4){$P_6$} \put(180,0){\line(1,0){70}}
\end{picture}
}

\centerline{
\begin{picture}(300,60)(-40,0)
\put(-50,4){$(1,3,5)$\footnotemark[3]}
\put(20,0){\line(0,1){30}} \put(17,34){$N_3$}
\put(50,10){\line(0,1){20}} \put(47,34){$N_5$}
\put(110,0){\line(0,1){30}} \put(107,34){$N_5$}
\put(140,10){\line(0,1){20}} \put(137,34){$N_1$}
\put(200,0){\line(0,1){30}} \put(197,34){$N_1$}
\put(230,10){\line(0,1){20}} \put(227,34){$N_3$}
\thicklines
\put(0,4){$P_1$} \put(0,0){\line(1,0){70}}
\put(90,4){$P_3$} \put(90,0){\line(1,0){70}}
\put(180,4){$P_5$} \put(180,0){\line(1,0){70}}
\end{picture}
}
\footnotetext[3]{Stable under order 3 relabeling}

\centerline{
\begin{picture}(300,90)(-40,-40)
\put(-50,4){$(1,3,6)$}
\put(20,0){\line(0,1){30}} \put(17,34){$N_3$}
\put(50,0){\line(0,1){30}} \put(47,34){$N_6$}
\put(110,0){\line(0,1){30}} \put(107,34){$N_6$}
\put(140,-10){\line(0,-1){20}} \put(137,-39){$N_1$}
\put(200,0){\line(0,1){30}} \put(197,34){$N_1$}
\put(230,0){\line(0,-1){30}} \put(227,-39){$N_3$}
\thicklines
\put(0,4){$P_1$} \put(0,0){\line(1,0){70}}
\put(90,4){$P_3$} \put(90,0){\line(1,0){70}}
\put(180,4){$P_6$} \put(180,0){\line(1,0){70}}
\end{picture}
}

\centerline{
\begin{picture}(300,90)(-40,-40)
\put(-50,4){$(1,4,6)$}
\put(20,0){\line(0,1){30}} \put(17,34){$N_4$}
\put(50,0){\line(0,-1){30}} \put(47,-39){$N_6$}
\put(110,0){\line(0,1){30}} \put(107,34){$N_1$}
\put(140,-10){\line(0,-1){20}} \put(137,-39){$N_6$}
\put(200,0){\line(0,1){30}} \put(197,34){$N_1$}
\put(230,0){\line(0,1){30}} \put(227,34){$N_4$}
\thicklines
\put(0,4){$P_1$} \put(0,0){\line(1,0){70}}
\put(90,4){$P_4$} \put(90,0){\line(1,0){70}}
\put(180,4){$P_6$} \put(180,0){\line(1,0){70}}
\end{picture}
}

\centerline{
\begin{picture}(300,60)(-40,-10)
\put(-50,4){$(2,4,6)$\footnotemark[3]}
\put(20,0){\line(0,1){30}} \put(17,34){$N_6$}
\put(50,10){\line(0,1){20}} \put(47,34){$N_4$}
\put(110,0){\line(0,1){30}} \put(107,34){$N_2$}
\put(140,10){\line(0,1){20}} \put(137,34){$N_6$}
\put(200,0){\line(0,1){30}} \put(197,34){$N_4$}
\put(230,10){\line(0,1){20}} \put(227,34){$N_2$}
\thicklines
\put(0,4){$P_2$} \put(0,0){\line(1,0){70}}
\put(90,4){$P_4$} \put(90,0){\line(1,0){70}}
\put(180,4){$P_6$} \put(180,0){\line(1,0){70}}
\end{picture}
}

\noindent{\textsc{Case 2.}} $N_{i}-N_{j}-O_{k}-O_{l}$ is not possible for any $i,j,k$ and $l$  with $i\neq j, k\neq l,l\pm1$.

In each of the following diagrams, we also consider order 3 cyclic relabelings and reverse cyclic relabelings.
One can verify that there is no proper ordering of the intersections of the edges $N_i,N_j,O_k,O_l$ along a line satisfying all the four parts of each of the diagrams below.

\centerline{
\begin{picture}(330,90)(-70,-40)
\put(-70,4){$(O_1O_3N_5N_6)$}
\put(15,0){\line(0,1){30}} \put(12,34){$N_6$}
\put(30,10){\line(0,1){20}} \put(27,34){$N_5$}
\put(45,0){\line(0,-1){30}} \put(42,-39){$O_3$}
\put(85,0){\line(0,1){30}} \put(82,34){$N_5$}
\put(100,10){\line(0,1){20}} \put(97,34){$O_1$}
\put(115,0){\line(0,-1){30}} \put(112,-39){$N_6$}
\put(155,0){\line(0,1){30}} \put(152,34){$N_6$}
\put(170,10){\line(0,1){20}} \put(167,34){$O_3$}
\put(185,0){\line(0,-1){30}} \put(182,-39){$O_1$}
\put(225,0){\line(0,1){30}} \put(222,34){$N_5$}
\put(240,10){\line(0,1){20}} \put(237,34){$O_1$}
\put(255,0){\line(0,-1){30}} \put(252,-39){$O_3$}
\thicklines
\put(0,4){$P_1$} \put(0,0){\line(1,0){50}}
\put(70,4){$P_3$} \put(70,0){\line(1,0){50}}
\put(140,4){$P_5$} \put(140,0){\line(1,0){50}}
\put(210,4){$P_6$} \put(210,0){\line(1,0){50}}
\end{picture}
}

\centerline{
\begin{picture}(330,90)(-70,-40)
\put(-70,4){$(O_1O_4N_3N_6)$}
\put(15,0){\line(0,1){30}} \put(12,34){$N_6$}
\put(30,0){\line(0,1){30}} \put(27,34){$N_3$}
\put(45,0){\line(0,1){30}} \put(42,34){$O_4$}
\put(85,0){\line(0,1){30}} \put(82,34){$N_3$}
\put(100,10){\line(0,1){20}} \put(97,34){$N_6$}
\put(115,0){\line(0,-1){30}} \put(112,-39){$O_1$}
\put(155,10){\line(0,1){20}} \put(152,34){$O_1$}
\put(170,10){\line(0,1){20}} \put(167,34){$O_4$}
\put(185,0){\line(0,-1){30}} \put(182,-39){$N_6$}
\put(225,10){\line(0,1){20}} \put(222,34){$O_4$}
\put(240,10){\line(0,1){20}} \put(237,34){$O_1$}
\put(255,0){\line(0,-1){30}} \put(252,-39){$N_3$}
\thicklines
\put(0,4){$P_1$} \put(0,0){\line(1,0){50}}
\put(70,4){$P_4$} \put(70,0){\line(1,0){50}}
\put(140,4){$P_3$} \put(140,0){\line(1,0){50}}
\put(210,4){$P_6$} \put(210,0){\line(1,0){50}}
\end{picture}
}

\centerline{
\begin{picture}(330,90)(-70,-40)
\put(-70,4){$(O_2O_4N_1N_6)$}
\put(15,0){\line(0,1){30}} \put(12,34){$N_1$}
\put(30,0){\line(0,1){30}} \put(27,34){$N_6$}
\put(45,0){\line(0,1){30}} \put(42,34){$O_4$}
\put(85,10){\line(0,1){20}} \put(82,34){$N_6$}
\put(100,10){\line(0,1){20}} \put(97,34){$O_2$}
\put(115,0){\line(0,-1){30}} \put(112,-39){$N_1$}
\put(155,0){\line(0,1){30}} \put(152,34){$N_6$}
\put(170,0){\line(0,1){30}} \put(167,34){$O_4$}
\put(185,10){\line(0,1){20}} \put(182,34){$O_2$}
\put(225,0){\line(0,1){30}} \put(222,34){$N_1$}
\put(240,0){\line(0,1){30}} \put(237,34){$O_2$}
\put(255,10){\line(0,1){20}} \put(252,34){$O_4$}
\thicklines
\put(0,4){$P_2$} \put(0,0){\line(1,0){50}}
\put(70,4){$P_4$} \put(70,0){\line(1,0){50}}
\put(140,4){$P_1$} \put(140,0){\line(1,0){50}}
\put(210,4){$P_6$} \put(210,0){\line(1,0){50}}
\end{picture}
}

This completes the proof.
\end{proof}

\section*{Acknowledgments}
This work was supported by Korea Research Foundation (KRF-2005-041-C00064).

\end{document}